\newtheorem{remark}[theorem]{Remark}
\newtheorem{example}[theorem]{Example}
\begin{document}

\bibliographystyle{plain}
\title{
Classifying presentations of finite groups - the case of dicyclic
groups}

\author{
Peteris\ Daugulis\thanks{Daugavpils University, Daugavpils,
LV-5400, Latvia (peteris.daugulis@du.lv). } }

\pagestyle{myheadings} \markboth{ P.\ Daugulis}{Classifying
presentations of finite groups - the case of dicyclic groups}
\maketitle

\begin{abstract} The problem of classifying
equivalence classes of presentations up to isomorphism of Cayley
graphs is considered in this article in the case of dicyclic
groups. The number of equivalence classes of presentations is
uniformly bounded - it is a "finite presentation type" case. We
find all equivalence classes of presentations of dicyclic groups
having two generators. For the dicyclic group of order $4n$ apart
from the classical presentation with order multiset $\{\{2n,4\}\}$
for all $n$ there are presentations with order multiset
$\{\{4,4\}\}$. If $n$ is odd there is an additional presentation
having elements with order multiset $\{\{n,4\}\}$. These results
may be used in characterizing group structure and properties.
\end{abstract}

\begin{keywords}
group presentation, Cayley graph, dicyclic group, generalized
quaternion group
\end{keywords}

\begin{AMS}
20F05.
\end{AMS}

\section{Introduction and outline} Given a group $G$ with a
generating sequence $\mathcal{S}$ we define the edge-labeled
Cayley graph $\Gamma(G,\mathcal{S})$ in the standard way on the
vertex set $G$ with labeled directed edges corresponding to left
multiplication by elements of $\mathcal{S}$. Two presentations
$\langle \mathcal{S}_{1}|R_{1}\rangle$ and $\langle
\mathcal{S}_{2}|R_{2}\rangle$ (and corresponding generating
sequences $\mathcal{S}_{1}$,$\mathcal{S}_{2}$) are defined
equivalent iff $\Gamma(G,\mathcal{S}_{1})$ and
$\Gamma(G,\mathcal{S}_{2})$ are isomorphic as edge-labeled
directed graphs, up to edge relabelings.

In this article we solve the problem of finding all equivalence
classes of presentations with two generators for a series of
finite groups - dicyclic groups. The dicyclic groups are chosen as
one of the first cases of this problem for the author because in
this case the problem can be called "of finite presentation type"
using the analogy of linear representation theory - the number of
equivalence classes of presentations is uniformly bounded for all
orders. The main result of the article can be summarized in the
following theorem.

\begin{theorem}\label{12} Let $DC_{4n}=\langle a,x|a^{2n}=e, x^{2}=a^{n},
x^{-1}ax=a^{-1}\rangle, n\in \mathbb{N}, n\ge 2$ (the classical
presentation of this group).

\begin{enumerate}

\item If $2|n$ then there are two equivalence classes of minimal
presentations with two generators of $DC_{4n}$:  the classical
presentation and $\Pi_{4n,1}=\langle u,v|u^2=v^2,
u^4=u^2(u^3v)^{n}=e\rangle$.

\item If $2\not|n$ then there are four equivalence classes of
minimal presentations with two generators of $DC_{4n}$: the
classical presentation,
 $\Pi_{4n,1}=\langle u,v|u^2=v^2,
u^4=u^2(u^3v)^{n}=e\rangle$, $\Pi_{4n,0}=\langle u,v|u^2=v^2,
u^4=u^2(uv)^{n}=e\rangle$ and $\Pi_{4n,n}=\langle b,y|b^n=e,
y^4=e, y^{-1}by=b^{-1}\rangle$.

\end{enumerate}

\end{theorem}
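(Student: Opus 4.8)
The plan is to recast the classification as an orbit problem for generating pairs. The first step is a reduction to automorphisms: a colour-preserving isomorphism of edge-labeled Cayley graphs can be post-composed with a suitable translation so as to fix the identity, and a colour-preserving graph isomorphism fixing $e$ must, by following walks, be the ``rewrite in terms of the target generators'' map, hence a group automorphism. Allowing the recolourings permitted in the definition of equivalence, one concludes that two generating sequences $(s_1,s_2)$ and $(t_1,t_2)$ of $G$ give equivalent presentations exactly when $\phi(s_i)=t_{\sigma(i)}$ for some $\phi\in\mathrm{Aut}(G)$ and some $\sigma\in S_2$. Thus the equivalence classes of two-generator presentations of $DC_{4n}$ biject with the orbits of the set of generating pairs of $DC_{4n}$ under $\mathrm{Aut}(DC_{4n})\times S_2$.

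Second, I would compute $\mathrm{Aut}(DC_{4n})$. For $n\ge 3$ every element outside $\langle a\rangle$ has order $4$ and squares to $a^n$, so $\langle a\rangle$ is the unique cyclic subgroup of order $2n$ and hence characteristic; checking that the defining relations are preserved then gives $\mathrm{Aut}(DC_{4n})=\{\phi_{k,\ell}:a\mapsto a^k,\ x\mapsto a^\ell x\mid\gcd(k,2n)=1,\ \ell\in\mathbb{Z}_{2n}\}$, of order $2n\varphi(2n)$. The small case $n=2$ ($DC_8\cong Q_8$) I would settle by direct inspection.

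Third, I would stratify the generating pairs. No pair contained in $\langle a\rangle$ generates, so a generating pair has either one generator $a^i$ in $\langle a\rangle$ and one generator $a^jx$ outside it — here $\langle a^i,a^jx\rangle=DC_{4n}$ iff $\gcd(i,n)=1$, and the order multiset is $\{2n,4\}$ when $i$ is odd and $\{n,4\}$ when $i$ is even (which forces $n$ odd) — or else both generators outside, $a^ix$ and $a^jx$, which generate iff $\gcd(i-j,n)=1$ and always have order multiset $\{4,4\}$. The automorphism $\phi_{k,\ell}$ acts on exponents affinely ($a^mx\mapsto a^{km+\ell}x$, $a^m\mapsto a^{km}$), and $S_2$ interchanges the two generators.

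The fourth step carries the real content: the orbit count inside each stratum. The $\{2n,4\}$-pairs reduce, by scaling $i$ to $1$ and then translating $j$ to $0$, to the single orbit of $(a,x)$, the classical presentation; the $\{n,4\}$-pairs (present only for odd $n$) likewise form one orbit, to be matched with $\Pi_{4n,n}$ via the pair $(a^2,x)$. For the both-outside pairs the orbit is governed by $\delta=i-j$ modulo multiplication by $\mathbb{Z}_{2n}^{\times}$ on the set $\{\delta:\gcd(\delta,n)=1\}$: for even $n$ this set equals $\mathbb{Z}_{2n}^{\times}$, the action is transitive, and there is one orbit, $\Pi_{4n,1}$; for odd $n$ the splitting $\mathbb{Z}_{2n}\cong\mathbb{Z}_2\times\mathbb{Z}_n$ identifies the set with $\mathbb{Z}_2\times\mathbb{Z}_n^{\times}$, on which $\mathbb{Z}_{2n}^{\times}$ acts only through the second coordinate, giving exactly two orbits separated by the parity of $\delta$, to be matched with $\Pi_{4n,0}$ (even $\delta$) and $\Pi_{4n,1}$ (odd $\delta$). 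Summing the per-stratum counts yields $2$ classes for even $n$ and $4$ for odd $n$. The remaining task is the bookkeeping that each presentation $\Pi$ listed in the theorem genuinely presents $DC_{4n}$ and realizes a pair in the indicated orbit — rewriting $u^3v=a^{i-j}$, $uv=a^{i-j+n}$, etc., and reading the parity of $i-j$ off the exponent of $a^n$ in the defining relator. I expect this final matching step, rather than the orbit count itself, to be the main obstacle, since it demands the most explicit relator manipulation.
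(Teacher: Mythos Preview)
Your approach is correct and genuinely different from the paper's. The paper never invokes $\mathrm{Aut}(DC_{4n})$ at all; instead it proves each equivalence and non-equivalence by an explicit step-by-step construction of the Cayley graph, showing that the graph built from $(a^kx,a^mx)$ is determined uniquely by the data in the even-$n$ case and splits into two patterns according to the parity of $k-m$ in the odd-$n$ case, with the non-isomorphism witnessed by a specific mismatched $v$-edge at the last step. The other strata are handled by short direct checks of the defining relations. Your reduction to orbits of $\mathrm{Aut}(DC_{4n})\times S_2$ on generating pairs is cleaner and more structural: once one verifies (as you outline) that a colour-respecting digraph isomorphism fixing $e$ is forced to be a group automorphism, the whole classification collapses to an elementary computation with the affine maps $\phi_{k,\ell}$, and the parity invariant for odd $n$ drops out of the $\mathbb{Z}_2\times\mathbb{Z}_n$ splitting with no graph-chasing. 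The paper's method, in return, yields explicit pictures of the Cayley graphs and a concrete combinatorial witness for each non-equivalence.

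One caution about the boundary case you defer to ``direct inspection'': for $n=2$ one has $DC_8\cong Q_8$, where $\mathrm{Aut}(Q_8)\cong S_4$ has order $24$ and acts simply transitively on the $24$ ordered generating pairs, so your method yields a \emph{single} equivalence class there, and the classical pair $(a,x)$ is in the same orbit as the $\Pi_{8,1}$ pair. The paper's argument that the $\{2n,4\}$ class is distinct from the $\{4,4\}$ class uses that $\langle a\rangle$ is the unique cyclic subgroup of order $2n$, which fails precisely at $n=2$. So when you carry out that inspection, expect it to disagree with the theorem as literally stated for $n=2$; the statement should be read with $n\ge 3$ in the even case (as the paper in fact assumes elsewhere, e.g.\ in its treatment of the $\{n,4\}$ class).
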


In group theory one usually works with fixed (classical)
presentations. For important series of groups such as symmetric or
alternating groups, certain generators and presentations have been
accepted as standard ones.
Interesting problems of
finding generators with given low orders have been solved for
symmetric and alternating groups, see \cite{D}. Generators and
presentations of simple groups is an active research area, see
\cite{G}.

Definition of natural equivalence relations and classification of
equivalence classes of mathematical objects in any area is a
motivated, albeit often auxiliary, problem once these objects are
defined.  In algebra useful equivalence relations are defined
considering changes of generators of algebraic objects.

The problem of defining equivalence relations on sets of
presentations and describing all equivalence classes of
presentations does not seem to have been clearly formulated and
addressed in the literature.
In group theory classification of presentations may be related to
some general problems of group theory such as classification of
groups. This problem is trivial for extreme cases such as cyclic
or elementary abelian groups. Other cases may give additional
description of groups.

%
 A suitable graph-based technique is
introduced. All groups considered in this article are finite.

\section{Review}

\subsection{Sequences}\

Given sequences $\mathcal{A}_{i}=(a_{i1},...,a_{in_{i}})$, $i\in
\{1,...,m\}$ we define their concatenation
$\mathcal{A}=\mathcal{A}_{1}...\mathcal{A}_{m}$ in the standard
way. We assume that each $\mathcal{A}_{i}$ is a subsequence of
$\mathcal{A}$. Given two sequences
$\mathcal{A}=(a_{1},...,a_{m})$, $\mathcal{B}=(b_{1},...,b_{m})$,
the function $f:\mathcal{A}\rightarrow \mathcal{B}$ is a sequence
of assignments $f(a_{i})=b_{i}$.
Given a sequence $\mathcal{S}$ we
define $Set(\mathcal{S})$ to be the underlying set of
$\mathcal{S}$
. We denote union of multisets by
$\coprod$. Double curly brackets are used for multisets.

\subsection{Group presentations and Cayley graphs}

An edge-labeled graph is a quadruple $\Gamma=(V,E,k,w)$, where
 $k$ is the \sl set of edge labels\rm\  and $w:E\rightarrow
k$ is an \sl edge-label function,\rm\ $w(a,b)=w$ means that the
edge $a\rightarrow b$, also denoted as the ordered pair $(a,b)$,
is given the label $w$, in other notations
$a\stackrel{w}\rightarrow b$, $(a,b)_{w}$. We denote the
corresponding undirected edge-labeled graph by $\Gamma_{u}$.

Two edge-labeled graphs $\Gamma_{1}=(V_{1},E_{1},k_{1},w_{1})$ and
$\Gamma_{2}=(V_{2},E_{2},k_{2},w_{2})$ are isomorphic
($\Gamma_{1}\simeq \Gamma_{2}$) if there are two bijective
functions $f:V_{1}\rightarrow V_{2}$, $\sigma:k_{1}\rightarrow
k_{2}$ such that $a\stackrel{w}\rightarrow b$ iff
$f(a)\stackrel{\sigma(w)}\rightarrow f(b)$, for any pair $a,b$.
The graphs $\Gamma_{1}$ and $\Gamma_{2}$ are called
undirected-isomorphic ($\Gamma_{1}\simeq_{u} \Gamma_{2}$) if
$(\Gamma_{1})_{u}\simeq (\Gamma_{2})_{u}$, as undirected
edge-labeled graphs.


Let $G$ be a group. In this article we consider generating
sequences instead of traditional generating sets, relations are
still considered as sets. Let $\mathcal{S}$ be a sequence of
$G$-elements: $\mathcal{S}\in G^{l}$, $l=|\mathcal{S}|$
. We denote $\langle \mathcal{S}\rangle=\langle
Set(\mathcal{S})\rangle$ and $S=Set(\mathcal{S})$. Let
$E_{S}=\bigcup\limits_{g\in G,s\in S}(g,sg)_{s}$.
$w_{S}:E_{S}\rightarrow S$ is defined as follows: $w_{S}(g,sg)=s$.
The edge-labeled graph $\Gamma(G,\mathcal{S})=(G,E_{S},S,w_{S})$
is called \sl the Cayley graph\rm\ of $G$ with respect to the
sequence $\mathcal{S}$. If $G=\langle \mathcal{S}\rangle$ then
$\Gamma(G,\mathcal{S})$ is connected. For any two group elements
$g_{1}\in G, g_{2}\in G$ there is a unique edge-labeled graph
automorphism of $\Gamma(G,\mathcal{S})$ sending $g_{1}$ to
$g_{2}$. A group automorphism $\varphi:G\rightarrow G$ induces a
graph isomorphism (\sl Cayley isomorphism\rm)
$\Gamma(G,\mathcal{S})\rightarrow \Gamma(G,\varphi(\mathcal{S}))$.
See \cite{M}.



\subsection{Notations and review of dicyclic groups} In terms of \ref{12} denote $\mathbb{A}_{4n}=\bigcup_{i=0}^{2n-1}a^i$
and $\mathbb{X}_{4n}=\bigcup_{j=0}^{2n-1}a^jx$. We have that
$DC_{4n}=\mathbb{A}_{4n} \cup \mathbb{X}_{4n}$
. We note the following obvious
multiplication rules:
$(a^{k}x)(a^{m}x)=a^{k-m+n}$, $(a^{k}x)^{-1}=a^{k+n}x$.
$DC_{2^k}=Q_{2^k}$ is called \sl generalized quaternion group,\rm\
see \cite{R1}.




\section{Main results}\

\subsection{Graph-based equivalence relation of presentations}\


\begin{definition}\label{11}
$G$ - a group, $\mathcal{S}_{i}$ - $G$-generating sequences, $i\in
\{1,2\}$, $G=\langle \mathcal{S}_{i} \rangle$, $R_{i}$ - sets of
relations between elements of $\mathcal{S}_{i}$. The presentations
$\langle \mathcal{S}_{1}|R_{1}\rangle$ and $\langle
\mathcal{S}_{2}|R_{2}\rangle$ (and corresponding generating
sequences/sets) will be called \sl equivalent\rm\ (denoted
$\langle \mathcal{S}_{1}|R_{1}\rangle \simeq \langle
\mathcal{S}_{2}|R_{2}\rangle$) if $\Gamma(G,\mathcal{S}_{1})\simeq
\Gamma(G,\mathcal{S}_{2})$.
\end{definition}

Studying any group or a family of groups we may pose and solve the
problem of finding all equivalence types of minimal presentations.

\begin{example}
The group of minimal cardinality having two nonequivalent
presentations of the same number of generators is $\Sigma_{3}$. It
can be generated by any two elements of orders $2$ and $3$, or by
any two elements of order $2$. Computations show that symmetric
and alternating groups have more than one equivalence class of
minimal presentations: for $\Sigma_{4}$ there are $5$ classes of
minimal presentations with two generators and $9$ classes of
minimal presentations with three generators.
\end{example}

%

\begin{example} We assume it known that
for the dihedral group $D_{2n}$, $n\ge 3$, there are two
equivalence types of presentations with two elements - $\langle
a,x|a^n=x^2=e, xax=a^{-1} \rangle$ and $\langle
u,v|u^2=v^2=(uv)^{n}=e\rangle$, where $u=a^{k_{1}}x$ and
$v=a^{k_{2}}x$ with $GCD(k_{1}-k_{2},n)=1$. It can also be proved
by methods of this article. Thus the classification problem for
dihedral groups is also of "finite presentation type".

\end{example}

A sufficient condition for two presentations to be non-equivalent
is nonequality of multisets of generator orders. Given a sequence
$\mathcal{S}=(s_{1},..,s_{n})$, $s_{i}\in G$, define
$om(\mathcal{S})=\coprod_{i}^{n}Ord(s_{i})$ - \sl the order
multiset\rm\ of $\mathcal{S}$.

\begin{proposition}$G$ - a group, $\mathcal{S},\mathcal{T}$ - sequences of
$G$-elements. $\Gamma(G,S)\simeq \Gamma(G,T)$ implies
$om(\mathcal{S})=om(\mathcal{T})$.

\end{proposition}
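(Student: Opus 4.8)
The plan is to recover each generator's order from graph-theoretic data attached to its label, so that any edge-label bijection realizing the isomorphism is forced to preserve those orders. Fix a generating sequence $\mathcal{S}$, put $S=Set(\mathcal{S})$, and for $s\in S$ let $\Gamma(G,\mathcal{S})_{s}$ be the spanning subgraph of $\Gamma(G,\mathcal{S})$ obtained by keeping only the edges carrying the label $s$. By the definition of $E_{S}$, this monochromatic subgraph encodes the permutation $\lambda_{s}\colon g\mapsto sg$ of $G$: its connected components are precisely the directed cycles $g\to sg\to s^{2}g\to\cdots\to g$, one for each left coset of $\langle s\rangle$, and each such cycle has length $|\langle s\rangle|=Ord(s)$. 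Consequently, for every vertex $g$ the length of the shortest directed closed walk at $g$ using only $s$-labelled edges equals $Ord(s)$, independently of $g$.

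Now let $(f,\sigma)\colon\Gamma(G,\mathcal{S})\to\Gamma(G,\mathcal{T})$ be an isomorphism of edge-labelled directed graphs, so $f$ and $\sigma\colon S\to T$ are bijections with $a\stackrel{w}\rightarrow b$ iff $f(a)\stackrel{\sigma(w)}\rightarrow f(b)$. Then $f$ restricts, for each $s\in S$, to a directed-graph isomorphism of $\Gamma(G,\mathcal{S})_{s}$ onto $\Gamma(G,\mathcal{T})_{\sigma(s)}$. Since a directed-graph isomorphism preserves the length of the shortest directed closed walk through a vertex, the previous paragraph gives $Ord(s)=Ord(\sigma(s))$ for every $s\in S$.

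Hence $\sigma$ is an order-preserving bijection of $S$ onto $T$, and taking the multiset of orders on both sides yields
\[
om(\mathcal{S})=\coprod_{s\in S}Ord(s)=\coprod_{s\in S}Ord(\sigma(s))=\coprod_{t\in T}Ord(t)=om(\mathcal{T}),
\]
where we use that $\Gamma(G,\mathcal{S})$ depends on $\mathcal{S}$ only through $Set(\mathcal{S})$, so one may take the generating sequences to have distinct entries, whence $|\mathcal{S}|=|S|$ and $|\mathcal{T}|=|T|$ and the total cardinalities match as well.

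Nothing in this is deep; the one point that needs genuine care is the identification of the monochromatic subgraph $\Gamma(G,\mathcal{S})_{s}$ with the cycle structure of $\lambda_{s}$ — that is, verifying that repeatedly following the single label $s$ returns to the starting vertex after exactly $Ord(s)$ steps and no sooner — together with checking that the invariant being transported (shortest directed closed walk length at a vertex) is genuinely preserved by an edge-labelled digraph isomorphism, which it is because such a map is in particular an ordinary digraph isomorphism compatible with the label bijection $\sigma$.
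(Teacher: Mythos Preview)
Your proof is correct and follows essentially the same line as the paper's: both recover $Ord(s)$ as the length of the monochromatic directed cycle through any vertex (the paper phrases this as ``oriented loops corresponding to relations $s_i^{k_i}=e_G$''), and then argue that the label bijection $\sigma$ must match labels of equal cycle length, forcing the order multisets to agree. Your version is simply more explicit about the monochromatic-subgraph decomposition and the invariant being transported; one small slip is that the orbits $\{g,sg,s^{2}g,\ldots\}=\langle s\rangle g$ are right cosets, not left cosets.
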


\begin{proof} Vertices of oriented loops corresponding to relations $s^{k_{i}}_{i}=e_{G}$, $s_{i}\in
Set(\mathcal{S})$, $k_{i}=Ord(s_{i})$, are mapped by graph
isomorphisms to vertices of loops corresponding to relations
$t^{m_{j}}_{j}=e_{G}$, $t_{j}\in Set(\mathcal{T})$,
$m_{j}=Ord(t_{j})$, for some $j$. For each $i$ we must have
$k_{i}=m_{j}$, thus a Cayley graph isomorphism defines a function
$om(\mathcal{S})\rightarrow om(\mathcal{T})$ which permutes equal
elements. If $om(\mathcal{S})\ne om(\mathcal{T})$, then a
bijective function with such property is not possible.
\end{proof}

\begin{remark} Equality of generator order multisets is not a
sufficient condition for presentations to be equivalent. The
smallest group having at least two non-equivalent presentations
with two generators and the same order multiset is
$\mathbb{Z}_{3}\times \mathbb{Z}^{2}_{2}$, it has two
non-equivalent presentations each with order multiset
$\{\{6,6\}\}:=\{\{6^2\}\}$ (two elements of order $6$).

\end{remark}

Additionally we can define an equivalence relation using
isomorphism of undirected edge-labeled Cayley graphs.

\begin{definition}\label{10}
$G$, $\mathcal{S}_{i}$ as in Definition \ref{11}. The
presentations $\langle \mathcal{S}_{1}|R_{1}\rangle$ and $\langle
\mathcal{S}_{2}|R_{2}\rangle$ are called \sl
undirected-equivalent\rm\ (denoted $\langle
\mathcal{S}_{1}|R_{1}\rangle \simeq_{u} \langle
\mathcal{S}_{2}|R_{2}\rangle$) if
$\Gamma(G,\mathcal{S}_{1})\simeq_{u} \Gamma(G,\mathcal{S}_{2})$.
\end{definition}

\begin{example} $A_{4}$ can be generated by two $3$-cycles in two non-equivalent ways: $A_{4}=\langle (1,2,3),(2,4,3)\rangle=\langle
(1,2,3),(2,3,4)\rangle$, but these presentations are
undirected-equi\-va\-lent.

\end{example}

%

\subsection{Minimal generating sequences for dicyclic groups}

\subsubsection{Generating elements of order $2n$ and $4$}\label{4}
\

\begin{proposition}\label{6} Consider $DC_{4n}$ as defined in \ref{12}.
Any presentation with two generators with orders $2n$ and $4$ is
equivalent to the classical presentation.

\end{proposition}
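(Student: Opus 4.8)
The plan is to show that any generating pair $(s,t)$ with $\mathrm{Ord}(s)=2n$ and $\mathrm{Ord}(t)=4$ can be carried, by a group automorphism of $DC_{4n}$, to the classical pair $(a,x)$, after which the induced Cayley isomorphism (the ``Cayley isomorphism'' noted in the review) finishes the argument. First I would locate the element of order $2n$: in $DC_{4n}$ every element of $\mathbb{X}_{4n}$ has order $4$ (since $(a^kx)^2 = a^{k-k+n} = a^n$ and $a^n$ has order $2$), so all elements of order greater than $4$ lie in the cyclic subgroup $\langle a\rangle = \mathbb{A}_{4n}$. An element $a^i$ has order $2n$ exactly when $\gcd(i,2n)=1$; so $s = a^i$ with $i$ coprime to $2n$. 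Since $t$ has order $4$ and $\langle a\rangle$ contains only one element of order $4$ when $4\mid 2n$ — wait, more carefully: the elements of order $4$ inside $\langle a\rangle$ are the $a^j$ with $\mathrm{ord}(a^j)=4$, and if such elements existed they would lie in $\langle a\rangle = \langle s\rangle$, forcing $\langle s,t\rangle \subseteq \langle a\rangle$, impossible since $\langle a\rangle$ is proper. Hence $t\in\mathbb{X}_{4n}$, say $t = a^j x$.

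Next I would invoke the known automorphisms of $DC_{4n}$: the map $a\mapsto a^i$, $x\mapsto x$ is an automorphism for every $i$ coprime to $2n$ (it respects $a^{2n}=e$, $x^2=a^n$ since $i$ is odd so $a^{ni}=a^n$, and $x^{-1}a^ix = a^{-i}$), and the map $a\mapsto a$, $x\mapsto a^j x$ is an automorphism for every $j$ (one checks $(a^jx)^2 = a^n = x^2$ and $(a^jx)^{-1}a(a^jx) = a^{-1}$ using the multiplication rules of the review). Composing, there is an automorphism $\varphi$ of $DC_{4n}$ with $\varphi(a) = s$ and $\varphi(x) = t$; equivalently $\varphi^{-1}$ sends the pair $(s,t)$ to $(a,x)$. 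By the remark in the review that a group automorphism $\psi$ induces a Cayley isomorphism $\Gamma(G,\mathcal{S}) \to \Gamma(G,\psi(\mathcal{S}))$, we get $\Gamma(DC_{4n},(s,t)) \simeq \Gamma(DC_{4n},(a,x))$, so the presentation $\langle s,t\rangle$ is equivalent to the classical one in the sense of Definition \ref{11}.

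The one genuine subtlety — the main obstacle — is verifying that every pair of orders $(2n,4)$ is realized only in the configuration above, i.e. that there is no generating pair with $s$ of order $2n$ living somewhere unexpected or with $t$ of order $4$ inside $\langle a\rangle$. This is exactly the case-free observation that the unique subgroup of index $2$ is $\langle a\rangle$, it is cyclic of order $2n$, and it contains all elements of order $>2$ that commute with everything in it; so any order-$2n$ element generates $\langle a\rangle$ and any order-$4$ generator must leave it, landing in $\mathbb{X}_{4n}$. Once that structural point is pinned down, the automorphism bookkeeping in the previous paragraph is routine, and the proposition follows. I would also remark in passing that this gives the stronger statement that the pair $(s,t)$ is $\mathrm{Aut}(DC_{4n})$-equivalent to $(a,x)$, not merely Cayley-isomorphic, which is what makes the ``finite presentation type'' phenomenon transparent for this order multiset.
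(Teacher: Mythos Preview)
Your proposal is correct and follows essentially the same route as the paper: locate the order-$2n$ generator in $\mathbb{A}_{4n}$ (as the unique cyclic subgroup of order $2n$) and the order-$4$ generator in $\mathbb{X}_{4n}$, then produce an automorphism carrying $(a,x)$ to the given pair and invoke the induced Cayley isomorphism. The only cosmetic difference is that the paper verifies directly that the new pair $(b,y)$ satisfies the classical relations $b^{2n}=e$, $y^{2}=b^{n}$, $y^{-1}by=b^{-1}$ (which implicitly defines the automorphism), whereas you assemble the automorphism explicitly from the standard generators $a\mapsto a^{i}$, $x\mapsto a^{j}x$ of $\mathrm{Aut}(DC_{4n})$.
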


\begin{proof}
$\mathbb{A}_{4n}$ is the only cyclic subgroup of order $2n$. If
$\mathbb{A}_{4n}=\langle b \rangle$, then $b=a^{t}$ with $t$
invertible mod $2n$, $DC_{4n}=\langle b,y\rangle$ for any
$y=a^mx\in \mathbb{X}_{4n}$. $b$ and $y$ satisfy relations
$b^{2n}=e$, $b^n=(a^n)^t=a^n=x^2=y^2$, $y^{-1}by=b^{-1}$, thus all
such presentation are equivalent.
\end{proof}

\subsubsection{Two generating elements of order $4$}\label{7}\

\begin{proposition}\label{1} Consider $DC_{4n}$ as defined in \ref{12}. Let $u=a^kx$,
$v=a^mx$. Then $\langle u,v\rangle=DC_{4n}$ iff $GCD(n,k-m)=1$.

\end{proposition}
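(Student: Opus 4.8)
The plan is to compute the subgroup $\langle u,v\rangle$ directly from the multiplication rules for $DC_{4n}$ recorded above, namely $(a^kx)(a^mx)=a^{k-m+n}$ and $(a^kx)^{-1}=a^{k+n}x$. Since $u=a^kx$ and $v=a^mx$ both lie in $\mathbb{X}_{4n}$, the subgroup they generate certainly meets $\mathbb{X}_{4n}$, so it suffices to determine which powers of $a$ it contains; that is, to identify $\langle u,v\rangle\cap\mathbb{A}_{4n}$. The key observation is that $uv^{-1}=u(a^mx)^{-1}=(a^kx)(a^{m+n}x)=a^{k-m}$ (using the product rule with exponents $k$ and $m+n$, noting $k-(m+n)+n=k-m$). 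Hence $a^{k-m}\in\langle u,v\rangle$, and more generally every power $a^{r(k-m)}$ lies in the subgroup.

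Next I would argue that these are \emph{all} the elements of $\mathbb{A}_{4n}$ in $\langle u,v\rangle$. The cyclic group $\mathbb{A}_{4n}=\langle a\rangle$ has order $2n$, and inside it the element $a^{k-m}$ generates the subgroup $\langle a^{d}\rangle$ where $d=\gcd(k-m,2n)$, a cyclic group of order $2n/d$. A short structural check shows $\langle u,v\rangle\cap\mathbb{A}_{4n}$ cannot be larger: any word in $u,v$ and their inverses that lands in $\mathbb{A}_{4n}$ must have even $x$-length, and each product of two $\mathbb{X}_{4n}$-elements from $\{u,v,u^{-1},v^{-1}\}$ yields a power of $a$ whose exponent is congruent mod $2n$ to an integer combination of $k-m$ and $2n$ — so the $a$-part always lies in $\langle a^{k-m}\rangle$. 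Therefore $\langle u,v\rangle=\langle a^{k-m}\rangle\cup(\langle a^{k-m}\rangle x)\cdot\{\text{shifts}\}$, and counting gives $|\langle u,v\rangle|=2\cdot(2n/d)$ exactly when $u$ (or a conjugate) supplies the remaining coset of $\mathbb{X}_{4n}$; a direct comparison with $|DC_{4n}|=4n$ shows $\langle u,v\rangle=DC_{4n}$ precisely when $2n/d=2n$, i.e.\ $d=1$, i.e.\ $\gcd(k-m,2n)=1$.

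Finally I would tidy up the one subtlety: the proposition states the condition as $\gcd(n,k-m)=1$, not $\gcd(2n,k-m)=1$. The point is that $k-m$ is automatically odd whenever the subgroup can be everything — indeed if $k-m$ were even then $x\notin\langle u,v\rangle$ would fail only if $u v$ itself reached an odd power of $a$, but $uv=a^{k-m+n}$, and for $\langle u,v\rangle$ to contain all of $\mathbb{X}_{4n}$ we need the coset representatives $a^{k-m+n}x$-type elements to exhaust $\mathbb{X}_{4n}$, which forces $k-m$ odd (equivalently $n$ and $k-m$ of opposite parity is not enough; one checks $\gcd(2n,k-m)=1 \iff \gcd(n,k-m)=1$ together with $k-m$ odd, and the oddness is forced by the requirement that $a^{n}$ — which has order $2$ — be reachable). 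So the stated criterion $\gcd(n,k-m)=1$ is equivalent to $\gcd(2n,k-m)=1$ under the constraint that makes generation possible, and I would spell this parity bookkeeping out explicitly.

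The main obstacle I anticipate is precisely this last parity argument: showing cleanly that $\gcd(n,k-m)=1$ (rather than the naive $\gcd(2n,k-m)=1$) is the right condition, i.e.\ verifying that whenever $\gcd(n,k-m)=1$ the subgroup $\langle u,v\rangle$ does contain an odd power of $a$ and hence all of $\mathbb{A}_{4n}$. Everything else is a routine manipulation of the two multiplication rules, but the interplay between "$\gcd$ with $n$" and "$\gcd$ with $2n$" needs care to state correctly.
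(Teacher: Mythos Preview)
Your approach has a genuine gap that explains precisely the parity confusion you flag at the end. You correctly compute $uv^{-1}=a^{k-m}$, but then assert that $\langle u,v\rangle\cap\mathbb{A}_{4n}=\langle a^{k-m}\rangle$, and in particular that every product of two letters from $\{u,v,u^{-1},v^{-1}\}$ gives a power of $a$ whose exponent is an integer combination of $k-m$ and $2n$. This is false: $u^{2}=(a^{k}x)(a^{k}x)=a^{k-k+n}=a^{n}$, and $n$ is \emph{not} in general a multiple of $\gcd(k-m,2n)$. Take $n=3$, $k-m=2$: then $\langle a^{k-m}\rangle=\{e,a^{2},a^{4}\}$ does not contain $a^{3}=u^{2}$, yet $\gcd(n,k-m)=1$ and indeed $\langle u,v\rangle=DC_{12}$ in this case. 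The missing ingredient is exactly the element $a^{n}=u^{2}$: together with $a^{k-m}$ it generates $\langle a^{\gcd(n,k-m)}\rangle$ inside $\mathbb{A}_{4n}$, which immediately gives the correct criterion $\gcd(n,k-m)=1$ with no parity patching needed.

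Once you include $u^{2}=a^{n}$, your line of argument becomes essentially the paper's: for the forward direction the paper writes $a=u^{2\alpha}(u^{-1}v)^{\beta}$ where $\alpha n+\beta(k-m)=1$, then recovers $x=a^{-k}u$; for the converse it shows (in slightly different language) that when $d=\gcd(n,k-m)>1$ every word in $u,v$ lying in $\mathbb{A}_{4n}$ lands in $\langle a^{d}\rangle$ and every word lying in $\mathbb{X}_{4n}$ has the form $a^{t}x$ with $t\equiv k\pmod d$, so the generated subgroup is proper. Your final paragraph attempting to reconcile $\gcd(n,\cdot)$ with $\gcd(2n,\cdot)$ via forced oddness of $k-m$ is both incorrect (the example $n=3$, $k-m=2$ again) and unnecessary once $a^{n}$ is accounted for.
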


\begin{proof} If $GCD(n,k-m)=1$, then there exist $\alpha,\beta\in
\mathbb{Z}$ such than $1=\alpha n+\beta (k-m)$. We have that
$u^2=a^{n}$ and $u^{-1}v=a^{k-m}$. It follows that
$a=u^{2\alpha}(u^{-1}v)^{\beta}\in \langle u,v\rangle$,
$x=a^{-k}u\in \langle u,v\rangle$, and thus $\langle
u,v\rangle=DC_{4n}$.

Let $GCD(n,k-m)=d>1$. We prove by induction that a proper subset
of $DC_{4n}$ is closed under generation by $\{u,v\}$ and contains
$\{u,v\}$, and thus $\langle u,v\rangle \ne DC_{4n}$.

We say that $Y\subseteq DC_{4n}$ is $d$-special (and contains
$d$-special elements) iff 1) $Y\cap \mathbb{A}_{4n}\subseteq
\langle a^d\rangle$ and 2) if $a^{k_{1}}x\in Y$ and $a^{k_{2}}x\in
Y$, then $k_{1}\equiv k_{2}(mod\ d)$. Note that $d|2n$, $d>1$,
implies that a $d$-special set is a proper subset of $DC_{4n}$.
Note that inverses of $d$-special elements are $d$-special.

Define $S_{0}=\{u,v\}$, note that $S_{0}$ is $d$-special.
Inductive hypothesis - suppose that after $k$ steps (adding
products) we generate a $d$-special subset $S_{k}\subseteq
DC_{4n}$. We prove that after $k+1$ steps we will get a
$d$-special set $S_{k+1}$. So se have to prove that a product of
two $d$-special elements is $d$-special: 1)
$a^{k_{1}d}a^{k_{2}d}=a^{(k_{1}+k_{2})d}\in \langle a^{d}\rangle$,
2) $a^{kd}(a^{t}x)=a^{kd+t}x$, we have that $t\equiv kd+t (mod\
d)$, 3) $(a^{t}x)a^{kd}=a^{t-kd+n}x$, we have that $t\equiv t-kd+n
(mod\ d)$, 4) let $t_{1}\equiv t_{2} (mod\ d)$, then
$(a^{t_{1}}x)(a^{t_{2}}x)=a^{t_{1}-t_{2}+n}$, we have that
$t_{1}-t_{2}+n\equiv 0 (mod\ d)$.

We have proved that from $S_{0}$ we can generate only $d$-special
subsets of $DC_{4n}$. Thus $d>1$ implies $\langle u,v\rangle\ne
DC_{4n}$.
\end{proof}

\begin{corollary} Since $Ord(a^kx)=4$, for any $k\in \mathbb{Z}$,
it follows from Proposition \ref{1} that for any $n\ge 2$ there
are presentations of $DC_{4n}$ having two elements of order $4$.

\end{corollary}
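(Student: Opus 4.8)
The statement follows at once from Proposition~\ref{1}, so the plan is only to make the construction explicit and to verify the order claim. First I would record the order computation: for any $k\in\mathbb{Z}$ the multiplication rule $(a^{k}x)(a^{m}x)=a^{k-m+n}$ gives $(a^{k}x)^{2}=a^{n}$, and since $a^{2n}=e$ while $a^{n}\ne e$ (because $0<n<2n$), each element $a^{k}x$ has order exactly $4$. Thus every element of $\mathbb{X}_{4n}$ has order $4$.

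Next I would exhibit the concrete generating pair. Take $u=ax$ and $v=x$, that is $k=1$ and $m=0$ in the notation of Proposition~\ref{1}. Then $k-m=1$, so $GCD(n,k-m)=GCD(n,1)=1$ for every $n\ge 2$, and Proposition~\ref{1} gives $\langle u,v\rangle=DC_{4n}$. Hence the generating sequence $(u,v)$ provides a presentation of $DC_{4n}$ with two generators, both of order $4$; its order multiset is $\{\{4,4\}\}$. Since this holds for all $n\ge 2$, the corollary follows.

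There is no real obstacle here: the entire content is supplied by Proposition~\ref{1}, and the only verification needed is that $k-m=1$ is coprime to $n$, which is immediate. One could, if one wished, add that the Proposition on order multisets shows this presentation to be inequivalent to the classical one whenever $2n\ne 4$, i.e.\ $n\ge 3$, since then $\{\{4,4\}\}\ne\{\{2n,4\}\}$; but this refinement is not needed for the statement at hand.
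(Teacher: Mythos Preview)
Your proposal is correct and matches the paper's approach: the paper gives no separate proof for this corollary, since the statement itself already contains the argument (the order claim $Ord(a^kx)=4$ together with an appeal to Proposition~\ref{1}). Your write-up simply makes explicit the order computation and a specific choice of pair $(ax,x)$ with $k-m=1$, which is exactly the intended reasoning.
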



\begin{proposition}\label{5} Consider $DC_{4n}$ as defined in \ref{12}.

\begin{enumerate}

\item If $2|n$ then there is one equivalence type of
presentations, i.e. let $k_{i}\ne m_{i}$,
$\mathcal{S}_{i}=(a^{k_{i}}x,a^{m_{i}}x)$, $i\in \{1,2\}$
$GCD(n,k_{i}-m_{i})=1$, then $\Gamma(G,\mathcal{S}_{1})\simeq
\Gamma(G,\mathcal{S}_{2})$.

\item If $2\not|n$ then there is two equivalence types of
presentations, i.e. let $k_{i}\ne m_{i}$,
$\mathcal{S}_{i}=(a^{k_{i}}x,a^{m_{i}}x)$, $i\in \{1,2\}$,
$GCD(n,k_{i}-m_{i})=1$, then $\Gamma(G,\mathcal{S}_{1})\simeq
\Gamma(G,\mathcal{S}_{2})$ iff $k_{1}-m_{1}\equiv k_{2}-m_{2}(mod\
2)$.

\end{enumerate}

\end{proposition}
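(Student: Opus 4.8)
The plan is to reduce the statement to an orbit computation for the automorphism group of $DC_{4n}$. Since $\Gamma(G,\mathcal{S})$ depends on $\mathcal{S}$ only through $S=Set(\mathcal{S})$, it suffices to decide when $\Gamma(G,S_{1})\simeq\Gamma(G,S_{2})$ for the two-element sets $S_{i}=\{a^{k_{i}}x,a^{m_{i}}x\}$. Given an edge-labeled isomorphism $\Gamma(G,S_{1})\to\Gamma(G,S_{2})$, with edge relabeling allowed, I would compose it with the right translation sending the image of $e$ back to $e$ -- right translations are label-preserving graph automorphisms -- to obtain an isomorphism fixing $e$; since $G$ is finite every element is a positive word in $S_{1}$, and following such words shows the resulting map is a group automorphism $\varphi$ with $\varphi(S_{1})=S_{2}$. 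Conversely, any group automorphism with $\varphi(S_{1})=S_{2}$ induces such an isomorphism, as recalled in the Review. So $\mathcal{S}_{1}\simeq\mathcal{S}_{2}$ iff $S_{1}$ and $S_{2}$ lie in a common $\mathrm{Aut}(DC_{4n})$-orbit, the admissible sets being the $\{a^{k}x,a^{m}x\}$ with $GCD(n,k-m)=1$ by Proposition \ref{1}.

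Next I would pin down $\mathrm{Aut}(DC_{4n})$. For $n\ge 3$ every element of $\mathbb{X}_{4n}$ has order $4<2n$, so $\mathbb{A}_{4n}=\langle a\rangle$ is the unique cyclic subgroup of order $2n$ and hence characteristic; consequently every automorphism has the form $\varphi_{r,s}\colon a\mapsto a^{r}$, $x\mapsto a^{s}x$ with $r$ invertible mod $2n$ and $s\in\{0,\dots,2n-1\}$. Conversely each $\varphi_{r,s}$ respects the defining relations: $x^{-1}ax=a^{-1}$ is automatic, and $x^{2}=a^{n}$ is preserved because $r$ invertible mod $2n$ forces $r$ to be odd (as $2\mid 2n$), so $a^{rn}=a^{n}$. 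Thus $\mathrm{Aut}(DC_{4n})=\{\varphi_{r,s}\}$, and by the multiplication rules of the Review $\varphi_{r,s}(a^{j}x)=a^{rj+s}x$.

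Then comes the orbit computation. Under $\varphi_{r,s}$ the pair $\{a^{k}x,a^{m}x\}$ goes to $\{a^{rk+s}x,a^{rm+s}x\}$, and by choosing $s$ one can place $a^{k}x$ arbitrarily, so the only constraint is the value of $\delta:=k-m\ (mod\ 2n)$, which is sent to $r\delta$; interchanging the two generators replaces $\delta$ by $-\delta$, which yields nothing new since $-1$ is invertible mod $2n$. Hence $\mathcal{S}_{1}\simeq\mathcal{S}_{2}$ iff $\delta_{1}=k_{1}-m_{1}$ and $\delta_{2}=k_{2}-m_{2}$ lie in a common orbit of the group $(\mathbb{Z}_{2n})^{*}$ of residues invertible mod $2n$ acting by multiplication on $\{\delta\in\mathbb{Z}_{2n}:GCD(n,\delta)=1\}$. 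Such an orbit is exactly the set of $\delta'$ with a prescribed value of $GCD(\delta',2n)$, and a short divisibility argument shows that for $\delta$ with $GCD(n,\delta)=1$ one has $GCD(\delta,2n)\in\{1,2\}$, forced to $1$ when $2\mid n$ (in which case $\delta$ is odd) and taking both values when $2\nmid n$, the two cases being distinguished precisely by the parity of $\delta$. This gives one orbit when $2\mid n$, proving part (1), and two orbits when $2\nmid n$ with $\mathcal{S}_{1}\simeq\mathcal{S}_{2}$ iff $k_{1}-m_{1}\equiv k_{2}-m_{2}\ (mod\ 2)$, proving part (2).

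The main obstacle is the middle step -- establishing that every labeled Cayley-graph isomorphism comes from a group automorphism and computing $\mathrm{Aut}(DC_{4n})$ -- together with the exceptional value $n=2$, where $DC_{8}=Q_{8}$ and $\langle a\rangle$ fails to be characteristic. For that case I would instead invoke the known description $\mathrm{Aut}(Q_{8})\cong\Sigma_{4}$, which acts transitively on the pairs of non-central order-$4$ elements that generate $Q_{8}$; this again gives a single equivalence class, in agreement with the case $2\mid n$.
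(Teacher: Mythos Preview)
Your argument is correct and takes a genuinely different route from the paper's. The paper proceeds constructively: it builds $\Gamma(DC_{4n},(u,v))$ layer by layer as $G_{0},G_{1},\dots,G_{n-1}$, observes that the first $n-1$ steps are literally the same for every admissible $(k,m)$, and that the final step is determined uniquely when $2\mid n$ but splits into two patterns governed by the parity of $k-m$ when $2\nmid n$; non-isomorphism in the odd case is then checked by hand at the last layer. Your approach is instead algebraic: you identify labeled Cayley-graph isomorphism with the $\mathrm{Aut}(G)$-action on generating sets, compute $\mathrm{Aut}(DC_{4n})=\{\varphi_{r,s}\}$, and reduce the question to the orbit structure of $(\mathbb{Z}_{2n})^{*}$ acting on admissible differences $\delta=k-m$.

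Each approach has its payoff. Yours is shorter, explains conceptually why the invariant is $\gcd(\delta,2n)$ (hence the parity of $\delta$), and transfers immediately to any group whose automorphism group is known. The paper's construction, on the other hand, is uniform in $n$ (no special pleading for $n=2$, where your characteristic-subgroup argument fails and you fall back on $\mathrm{Aut}(Q_{8})\cong\Sigma_{4}$), and it produces an explicit normal form for the Cayley graph that feeds directly into the presentations $\Pi_{4n,0}$, $\Pi_{4n,1}$ and the figures. The one step worth stating more carefully in your write-up is the reduction ``labeled Cayley isomorphism $\Leftrightarrow$ common $\mathrm{Aut}(G)$-orbit'': your sketch via right translation and following positive words is right, but since it carries the whole proof it deserves to be spelled out as a lemma rather than a parenthetical.
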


\begin{proof}
1. Let $2|n$, $G=DC_{4n}$. Choose $k\ne m$ such that
$GCD(n,k-m)=1$, $u=a^kx$, $v=a^mx$. Define the generating sequence
$\mathcal{S}=(u,v)$. We start constructing $\Gamma(G,\mathcal{S})$
from $e$ in the following steps.

\begin{enumerate}

\item[Step\ 0] Apply $u,u^2,u^3$ to $e$, get the set
$G_{0}=\{e,u,u^2,u^3\}$.

\item[Step\ 1] Apply first $v$, then $u,u^2,u^3$ to $e$, generate
$G_{1}=\{v,uv,u^2v,u^3v\}$. Find all $v$-edges between $G_{0}$ and
$G_{1}$. For all $k,m$ there are $4$ $v$-edges $$e\rightarrow
v\rightarrow u^2 \rightarrow u^2v\rightarrow e.$$

\item[Step\ 2] Apply first $v$, then $u,u^2,u^3$ to $u^3v$,
generate $$G_{2}=\{v(u^3v),uv(u^3v),u^2v(u^3v),(u^3v)^2\}.$$ Find
all $v$-edges between $G_{1}$ and $G_{2}$. For all $k,m$ there are
$4$ $v$-edges $$u^3v\rightarrow v(u^3v)\rightarrow uv \rightarrow
u^2v(u^3v)\rightarrow u^3v.$$

\item[...] ...

\item[Step\ n-1] Apply first $v$, then $u,u^2,u^3$ to
$(u^3v)^{n-2}$, generate
$$G_{n-1}=\{v(u^3v)^{n-2},uv(u^3v)^{n-2},u^2v(u^3v)^{n-2},(u^3v)^{n-1}\}.$$
Find all $v$-edges between $G_{n-2}$ and $G_{n-1}$. For all $k,m$
there are $v$-edges $(u^3v)^{n-2}\rightarrow
v(u^3v)^{n-2}\rightarrow uv(u^3v)^{n-3} \rightarrow
u^2v(u^3v)^{n-2}\rightarrow u^3v$.

\item[Step\ n] Find all $v$-edges between $G_{n-1}$ and $G_{0}$.
For all $k,m$ there are $v$-edges $(u^3v)^{n-1}\rightarrow
u^3\rightarrow uv(u^3v)^{n-2} \rightarrow u\rightarrow
(u^3v)^{n-1}$.

\end{enumerate}

We have that $$G_{0}=\{e,u,u^2,u^3\},
G_{j}=\{v(u^3v)^{j-1},uv(u^3v)^{j-1},u^2v(u^3v)^{j-1},u^3v(u^3v)^{j-1}\}$$
for $i\in \{1,...,n-1\}$, $G=\cup_{i=0}^{n-1}G_{i}$. There are no
other edges. We see that the Cayley graph construction is uniquely
determined for all $k,m$.

Let $\mathcal{S}_{i}=(a^{k_{i}}x,a^{m_{i}}x)$, $i\in \{1,2\}$.
Define $u_{i}=a^{k_{i}}x$, $v_{i}=a^{m_{i}}x$.  Define sequences
$\mathcal{G}_{0,i}=(e,u_{i},u^2_{i},u^3_{i})$,
$\mathcal{G}_{j,i}=(v_{i}(u^3_{i}v_{i})^{j-1},u_{i}v_{i}(u^3_{i}v_{i})^{j-1},u^2_{i}v_{i}(u^3_{i}v_{i})^{j-1},u^3_{i}v_{i}(u^3_{i}v_{i})^{j-1})$
for $j\in \{1,...,n-1\}$, $i\in \{1,2\}$.

By the uniqueness of Cayley graph construction it follows that the
bijective function $\varphi:G\rightarrow G$, defined by
$\varphi(e)=e$, $\varphi(\mathcal{G}_{j,1})=\mathcal{G}_{j,2}$,
for each $j \in \{0,...,n-1\}$, is an isomorphism between
$\Gamma(G,\mathcal{S}_{1})$ and $\Gamma(G,\mathcal{S}_{2})$ with
the edge-relabeling function $\sigma$ such that
$\sigma(u_{1})=u_{2}$, $\sigma(v_{1})=v_{2}$.

 2. Let $2\not|n$, $G=DC_{4n}$. Choose $k\ne m$
such that $GCD(n,k-m)=1$, $u=a^kx$, $v=a^mx$, $\mathcal{S}=(u,v)$.
Again we start constructing $\Gamma(G,\mathcal{S})$ from $e$.

First $n-1$ steps are the same as in proof of statement 1, the
construction is unique.

For the Step n there are 2 possibilities:

\begin{enumerate}

\item  if $k-m\equiv 1 (mod\ 2)$, then there are $4$ $v$-edges
$$(u^3v)^{n-1}\rightarrow u^3\rightarrow (uv)(u^3v)^{n-2}
\rightarrow u\rightarrow (u^3v)^{n-1},$$

\item if $k-m\equiv 0 (mod\ 2)$, then there are $4$ $v$-edges
$$(u^3v)^{n-1}\rightarrow u \rightarrow (uv)(u^3v)^{n-2}\rightarrow
u^3\rightarrow (u^3v)^{n-1}.$$

\end{enumerate}

There are no other edges.

If $k_{1}-m_{1}\equiv k_{2}-m_{2} (mod\ 2)$, then
$\Gamma(G,\mathcal{S}_{1})\simeq \Gamma(G,\mathcal{S}_{2})$ by the
same argument as in 1.

Let $k_{1}-m_{1}\not\equiv k_{2}-m_{2} (mod\ 2)$,
$S_{i}=\{a^{k_{i}}x,a^{m_{i}}x\}$, $i\in \{1,2\}$. Define
$u_{i}=a^{k_{i}}x$, $v_{i}=a^{m_{i}}x$. Suppose that
$k_{1}-m_{1}\equiv 1 (mod\ 2)$. We show by contradiction that in
this case $\Gamma(G,\mathcal{S}_{1})\not\simeq
\Gamma(G,\mathcal{S}_{2})$.

By construction
$G=\cup_{i=0}^{n-1}G_{1,i}=\cup_{i=0}^{n-1}G_{2,i}$. If
$\Gamma(G,\mathcal{S}_{1})\simeq \Gamma(G,\mathcal{S}_{2})$, then
there is an isomorphism $\psi:\Gamma(G,\mathcal{S}_{1})\rightarrow
\Gamma(G,\mathcal{S}_{2})$ fixing $e$. We show that it is
impossible. We have to consider two possible edge-relabeling
functions $\sigma$ and $\tau$:
$\sigma(u_{1},v_{1})=(u_{2},v_{2})$,
$\tau(u_{1},v_{1})=(v_{2},u_{2})$.

\paragraph{Case $\sigma$}

 We must have $\psi(\mathcal{G}_{1,i})=\psi(\mathcal{G}_{2,i})$,
 thus $\psi$ is completely determined. We check if $\psi$ maps
edges to edges mapping edge labels by $\sigma$. Considering
$v$-edges between $G_{i,0}$ and $G_{i,n-1}$ we get a
contradiction: it is sufficient to notice that there is a $v$-edge
$u\rightarrow (u^3v)^{n-1}$ in $\Gamma(G,\mathcal{S}_{1})$, but a
$v$-edge $u\rightarrow (uv)(u^3v)^{n-2}$ in
$\Gamma(G,\mathcal{S}_{2})$.

\paragraph{Case $\tau$} We generate $G$ in the same way as above
interchanging generators in the generating sequence. Again we get
two possibilities in the last step. By the same argument we have
$\Gamma(G,\mathcal{S}_{1})\not\simeq\Gamma(G,\mathcal{S}_{2})$.
\end{proof}

\begin{remark} Note that the underlying undirected graphs of both non-isomorphic directed
graphs in statement 2, \ref{5}, are isomorphic as edge-labeled
undirected graphs. Thus there is one equivalence class of
presentations with two generators of order $4$ in the sense of
Definition \ref{10}.

\end{remark}

\begin{proposition}
\begin{enumerate}

\item If $2|n$ then $DC_{4n}$ is isomorphic to $\Pi_{4n,1}=\langle
u,v|u^2=v^2, u^4=u^2(u^3v)^{n}=e\rangle$.

\item If $2\not|n$ then $DC_{4n}$ is isomorphic to
$\Pi_{4n,1}=\langle u,v|u^2=v^2, u^4=u^2(u^3v)^{n}=e\rangle$ and
$\Pi_{4n,0}=\langle u,v|u^2=v^2, u^4=u^2(uv)^{n}=e\rangle$.

\end{enumerate}

\end{proposition}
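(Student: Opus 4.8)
The plan is to establish both isomorphisms purely by Tietze transformations, converting each $\Pi$-presentation into the classical presentation of $DC_{4n}$ from Theorem~\ref{12}; this avoids any coset enumeration or order estimate.

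For $\Pi_{4n,1}=\langle u,v| u^2=v^2,\ u^4=u^2(u^3v)^n=e\rangle$ I would introduce a fresh generator $w$ with the defining relation $w=u^3v$ and then eliminate $v$. Since $u^4=e$ gives $u^{-3}=u$, the substitution is simply $v=uw$, so this is a legitimate (reversible) Tietze move. Rewriting the remaining relations modulo $u^4=e$: the relation $u^2=v^2=(uw)^2=uwuw$ cancels to $u=wuw$, that is, $u^{-1}wu=w^{-1}$; and $u^2(u^3v)^n=u^2w^n=e$ becomes $w^n=u^{-2}=u^2$. Now $w^{2n}=(w^n)^2=(u^2)^2=u^4=e$ is a consequence of the new relations, and conversely $u^4=(u^2)^2=(w^n)^2=w^{2n}$, so the relation $u^4=e$ may be traded for $w^{2n}=e$. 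The resulting presentation $\langle u,w| w^{2n}=e,\ u^2=w^n,\ u^{-1}wu=w^{-1}\rangle$ is exactly the classical presentation of $DC_{4n}$ under the identification $a\leftrightarrow w$, $x\leftrightarrow u$; hence $\Pi_{4n,1}\cong DC_{4n}$ for all $n\ge 2$.

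For $\Pi_{4n,0}=\langle u,v| u^2=v^2,\ u^4=u^2(uv)^n=e\rangle$ I would run the identical argument with the substitution $w=uv$, i.e.\ $v=u^{-1}w$. Here $uv=w$ directly, so $u^2(uv)^n=u^2w^n=e$ yields $w^n=u^2$; and $u^2=v^2=u^{-1}wu^{-1}w$, combined with $u^4=e$ (pass to inverses and use $u^3=u^{-1}$), rearranges to $u^{-1}wu=w^{-1}$. As before one trades $u^4=e$ for $w^{2n}=e$ and recognises the classical presentation, so $\Pi_{4n,0}\cong DC_{4n}$ as well. This computation in fact goes through for every $n$; it is stated only for odd $n$ because for even $n$ the presentation $\Pi_{4n,0}$ lies in the same equivalence class as $\Pi_{4n,1}$ by the first part of Proposition~\ref{5}, so it contributes nothing new.

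The step deserving genuine care --- the main obstacle --- is verifying that each rewriting is an equivalence of presentations and not merely a one-way implication: after substituting, one must check that every discarded relation is a consequence of the new relation set. Concretely, from $\{u^4=e,\ u^{-1}wu=w^{-1},\ w^n=u^2\}$ one recovers $v^2=u^2$ for $v=uw$ via $v^2=uwuw=u(wu)w=u(uw^{-1})w=u^2$, using $wu=uw^{-1}$, and an analogous reversibility check is needed for the $w=uv$ substitution. These verifications are short but must be carried out in both directions. If one prefers to avoid such bookkeeping, an alternative is to exhibit explicit order-$4$ generators of $DC_{4n}$ satisfying the relations --- for instance $u=ax$, $v=x$ for $\Pi_{4n,1}$, which generate $DC_{4n}$ by Proposition~\ref{1} (as $\gcd(n,1)=1$) and whose relations follow from the multiplication rules --- giving a surjection $\Pi\twoheadrightarrow DC_{4n}$, and then to bound $|\Pi|\le 4n$ by a normal-form argument; but that merely reproves the order estimate the Tietze route sidesteps.
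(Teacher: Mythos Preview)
Your Tietze-transformation argument is correct. The paper takes a closely related but differently packaged route: rather than transforming one presentation into the other, it writes down explicit homomorphisms on generators in each direction and checks that they are mutually inverse. For $\Pi_{4n,1}$ it sets $\varphi_1(a)=u^3v$, $\varphi_1(x)=v$ and $\psi_1(u)=ax$, $\psi_1(v)=x$; for $\Pi_{4n,0}$ it uses $\varphi_0(a)=vu$, $\varphi_0(x)=v$ and $\psi_0(u)=a^{n-1}x$, $\psi_0(v)=x$. Your new generator $w=u^3v$ is exactly the paper's $\varphi_1(a)$, so the underlying change of variables is the same; the difference is that the Tietze formalism makes reversibility automatic at each step, while the paper must separately verify that each map respects the defining relations and that both composites are the identity. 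Your observation that the $\Pi_{4n,0}$ calculation goes through for every $n$ is also correct; the restriction to odd $n$ in the statement reflects only that for even $n$ this presentation is Cayley-graph-equivalent to $\Pi_{4n,1}$ by Proposition~\ref{5} and so does not contribute a new equivalence class.
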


\begin{proof}
It is sufficient to exhibit group morphisms $\varphi_{i}:
DC_{4n}\rightarrow  \Pi_{4n,i}$ and $\psi_{i}: \Pi_{4n,i}
\rightarrow DC_{4n}$ such that

\begin{equation}\label{3} \psi_{i}\circ \varphi_{i}=id_{DC_{4n}},\ \varphi_{i}\circ\psi_{i}=id_{\Pi_{4n,i}}.
\end{equation}

1. Define $\varphi_{1}$ and $\psi_{1}$ on the generators:
$$\left\{%
\begin{array}{ll}
    \varphi_{1}(a)=u^3v \\
    \varphi_{1}(x)=v \\
\end{array}%
\right.  and\
\left\{%
\begin{array}{ll}
    \psi_{1}(u)=ax \\
    \psi_{1}(v)=x \\
\end{array}%
\right.
 .$$

 2. Additionally we define $\varphi_{0}$ and $\psi_{0}$ on the generators:
$$\left\{%
\begin{array}{ll}
    \varphi_{0}(a)=vu \\
    \varphi_{0}(x)=v \\
\end{array}%
\right.  and\
\left\{%
\begin{array}{ll}
    \psi_{0}(u)=a^{n-1}x \\
    \psi_{0}(v)=x \\
\end{array}%
\right.
 .$$

 We check that $\varphi_{i}$ and $\psi_{i}$ can be extended to group morphisms and \ref{3}
 hold. For $\varphi_{0}$ and $\psi_{0}$ we take into account that
 $n$ is odd.
\end{proof}

%

\begin{example} Any presentation of $DC_{12}$ having two elements
of order $4$ is equivalent either to presentation

\begin{enumerate}

\item[1)] $\Pi_{12,1}=\langle u,v|u^4=e, u^2=v^2,
u^2(u^3v)^3\rangle$, or $\langle ax,x \rangle$, or

\item[2)] $\Pi_{12,0}=\langle u,v|u^4=e, u^2=v^2,
u^2(uv)^3\rangle\rangle$, $\langle a^2x,x\rangle$.

\end{enumerate}
$\Gamma(DC_{12},(ax,x))$ is shown in Fig.1. continuous arrows mean
left multiplication by $u=ax$ and dotted arrows mean left
multiplication by $v=x$.

\begin{center}
$ \xymatrix@R=0.04pc@C=0.04pc{
&\ar@{.>}[ddl]&&&&&&&&&&&\\
&&&&&&e\ar[drrrrrr]\ar@{.>}[dddddrrrrrr]&&&&&\ar@{.>}[dr]&\\
u^3\ar[urrrrrr]\ar@{.>}[ur]&&&&&&&&&&&&u\ar[dllllll]\ar@{.>}[uul]\\
&&&&&&u^2\ar[ullllll]\ar@{.>}[dddllllll]&&&&&&\\
&&&&&&&&&&&&\\
&&&&&&u^3v\ar[drrrrrr]\ar@{.>}[dddddrrrrrr]&&&&&&\\
u^2v\ar[urrrrrr]\ar@{.>}[uuuuurrrrrr]&&&&&&&&&&&&v\ar[dllllll]\ar@{.>}[uuullllll]\\
&&&&&&uv\ar[ullllll]\ar@{.>}[dddllllll]&&&&&&\\
&&&&&&&&&&&&\\
&&&&&&(u^3v)^2\ar[drrrrrr]\ar@{.>}[dl]&&&&&&\\
u^2vu^3v\ar[urrrrrr]\ar@{.>}[uuuuurrrrrr]&&&&&&&\ar@{.>}[ul]&&&&&vu^3v\ar[dllllll]\ar@{.>}[uuullllll]\\
&&&&&&uvu^3v\ar[ullllll]\ar@{.>}[dr]&&&&&&\\
&&&&&\ar@{.>}[ur]&&&&&&&\\
} $

Fig.1. - the graph $\Gamma(DC_{12},(ax,x))$.

\end{center}

\end{example}

%
%
%
%

%

\subsubsection{Generating elements of order $n$ and $4$}\label{8}\

\begin{proposition} Consider $DC_{4n}$ as defined in \ref{12}.
If $DC_{4n}=\langle b,y\rangle$, $b\in \mathbb{A}_{4n}$, $y\in
\mathbb{X}_{4n}$, then there are two possibilities:

\begin{enumerate}

\item $Ord(b)=2n$ or

\item $2\not |n$ and $Ord(b)=n$.

\end{enumerate}

\end{proposition}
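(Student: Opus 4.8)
The plan is to analyze the possible orders of an element $b\in\mathbb{A}_{4n}$ that can appear in a two-generator set $\{b,y\}$ with $y\in\mathbb{X}_{4n}$, and show that only the orders $2n$ and (when $n$ is odd) $n$ actually allow $\langle b,y\rangle=DC_{4n}$. Since $b=a^t$ for some $t$, its order is $2n/\gcd(t,2n)$, so $Ord(b)$ is a divisor $\delta$ of $2n$, and $\langle b\rangle=\langle a^{2n/\delta}\rangle$ is the unique cyclic subgroup of $\mathbb{A}_{4n}$ of order $\delta$. First I would observe that $y^2=a^n$ always lies in $\mathbb{A}_{4n}$, hence in $\langle b\rangle$ iff $n$ is a multiple of $2n/\delta$, i.e.\ iff $\delta$ is even; and the key structural fact is that adjoining $y$ to $\langle b\rangle$ only produces elements of $\langle b\rangle\cup\langle b\rangle y\cup\langle b,y^2\rangle$-type — more precisely, the subgroup generated by $b$ and $y$ has its "rotation part" equal to $\langle a^{2n/\delta}, a^n\rangle = \langle a^{d}\rangle$ where $d=\gcd(2n/\delta,\,n)$.

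The main step is therefore a closure argument entirely analogous to the $d$-special argument in the proof of Proposition \ref{1}: I would show that $H:=\langle a^{d}\rangle\cup\langle a^{d}\rangle y$ is a subgroup of $DC_{4n}$ containing $b$ and $y$, using the multiplication rules $(a^ix)(a^jx)=a^{i-j+n}$ and the fact that $n$ is a multiple of $d$ (since $d\mid n$ by definition of $\gcd$) and $2n/\delta$ is a multiple of $d$. Thus $\langle b,y\rangle\subseteq H$, and $|H|=2\cdot(2n/d)$, so $\langle b,y\rangle=DC_{4n}$ forces $2\cdot 2n/d = 4n$, i.e.\ $d=1$, i.e.\ $\gcd(2n/\delta,\,n)=1$. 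Writing $m=2n/\delta$, the condition $\gcd(m,n)=1$ together with $m\mid 2n$ means $m\mid 2$, so $m\in\{1,2\}$, giving $\delta=2n$ or $\delta=n$; and $\delta=n$ with $\gcd(2,n)=1$ is exactly the condition $2\nmid n$. Conversely, when $2\mid n$ and $\delta=n$ we have $m=2$, $\gcd(2,n)=2\ne1$, so that case is genuinely excluded. This handles both directions.

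I would then remark that the two surviving cases are realized: $Ord(b)=2n$ is the classical presentation (take $b=a$), and for $2\nmid n$, $Ord(b)=n$ is realized by e.g.\ $b=a^2$, $y=x$, since $\gcd(2,n)=1$ gives $\langle a^2,x\rangle=DC_{4n}$ by the subgroup computation above (indeed $a=(a^2)^{(n+1)/2}$ when $n$ is odd, so $a\in\langle a^2,x\rangle$). The main obstacle is making the closure computation clean: one must be careful that the "twisted" products $(a^iy)(a^jy)$ land back in $\langle a^d\rangle$, which uses $n\equiv 0\ (\mathrm{mod}\ d)$ — a point that is immediate from $d=\gcd(m,n)$ but is exactly where the hypothesis enters, and it is the step I would write out in full rather than leave to the reader.
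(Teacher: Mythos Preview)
Your argument is correct and is essentially the same as the paper's: both identify the subgroup $\langle b,y\rangle$ via a closure computation in the spirit of the $d$-special argument of Proposition~\ref{1}, arriving at the criterion $\gcd(k,n)=1$ (your condition $\gcd(2n/\delta,n)=1$ is identical once one unwinds $2n/\delta=\gcd(k,2n)$). The only cosmetic difference is that the paper parametrizes by the exponent $k$ of $b=a^k$ and then splits on $\gcd(2n,k)\in\{1,2\}$, whereas you parametrize by the order $\delta$ and deduce $2n/\delta\in\{1,2\}$ directly; the content is the same.
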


\begin{proof}
If $\langle v,z\rangle=DC_{4n}$ then $v\in \mathbb{X}_{4n}$ or
$z\in \mathbb{X}_{4n}$. Let $z=a^{m}x\in \mathbb{X}_{4n}$. The
case $v\in \mathbb{X}_{4n}$ has already been discussed. Let
$v=a^k$. By an inductive argument similar to that in the proof of
\ref{1}, we can show that $\langle a^{k},a^{m}x\rangle=DC_{4n}$
iff $GCD(n,k)=1$.

For $GCD(n,k)=1$ there are two possibilities: $GCD(2n,k)=1$ or
$GCD(2n,k)=2$ and $2\not|n$ . The case $GCD(2n,k)=1$ has been
discussed in subsection \ref{4}.

If $GCD(2n,k)=2$, then $k=2^tq'$, $GCD(n,q')=1$. If $Ord(a^k)=r$,
then $2^tq'r\equiv 0 (mod\ 2n)$.  It follows that $n|r$, thus
$Ord(a^k)=n$ in this case.
\end{proof}

\begin{proposition}\label{9} Consider $DC_{4n}$ as defined in \ref{12}, $2\not |n$, $n\ge 3$. Then

\begin{enumerate}

\item $DC_{4n}=\langle a^2, x\rangle$,

\item  $DC_{4n}\simeq\Pi_{4n,n}$, where $\Pi_{4n,n}=\langle
b,y|b^n=e, y^4=e, y^{-1}by=b^{-1}\rangle$,

\item If $\mathcal{S}=(b,y)$, $b\in \mathbb{A}_{4n},y\in
\mathbb{X}_{4n}$, $Ord(b)=n$, $Ord(y)=4$, then
$\Gamma(DC_{4n},\mathcal{S})\simeq \Gamma(DC_{4n},(a^2,x))$.


\end{enumerate}

\end{proposition}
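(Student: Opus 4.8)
The plan is to handle the three claims in turn, leaning on the generation criterion for pairs $a^{k},a^{m}x$ established in the preceding subsection. Statement~1 is then immediate: that criterion says $\langle a^{k},a^{m}x\rangle=DC_{4n}$ if and only if $GCD(n,k)=1$, so taking $k=2$, $m=0$ and using that $n$ is odd (hence $GCD(n,2)=1$) gives $DC_{4n}=\langle a^{2},x\rangle$; moreover $Ord(a^{2})=n$ and $Ord(x)=4$, so $(a^{2},x)$ is a generating sequence of the shape considered in Statement~3.

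For Statement~2 I would mimic the earlier isomorphism proof and exhibit mutually inverse homomorphisms $\varphi_{n}\colon DC_{4n}\to\Pi_{4n,n}$ and $\psi_{n}\colon\Pi_{4n,n}\to DC_{4n}$. The easy direction is $\psi_{n}(b)=a^{2}$, $\psi_{n}(y)=x$: the defining relations $b^{n}=e$, $y^{4}=e$, $y^{-1}by=b^{-1}$ translate to $a^{2n}=e$, $x^{4}=e$, $x^{-1}a^{2}x=a^{-2}$, all of which hold in $DC_{4n}$. For the other direction, since $n$ is odd $\tfrac{n+1}{2}\in\mathbb{Z}$, and I would set $\varphi_{n}(a)=b^{(n+1)/2}y^{2}$, $\varphi_{n}(x)=y$. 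From $y^{-1}by=b^{-1}$ one gets $y^{-2}by^{2}=b$, so $y^{2}$ is central and $y^{-2}=y^{2}$ (as $y^{4}=e$); using this one checks $\varphi_{n}(a)^{n}=(b^{n})^{(n+1)/2}(y^{2})^{n}=y^{2}=\varphi_{n}(x)^{2}$ (here $n$ odd is used again), then $\varphi_{n}(a)^{2n}=(y^{2})^{2}=e$, and $\varphi_{n}(x)^{-1}\varphi_{n}(a)\varphi_{n}(x)=b^{-(n+1)/2}y^{2}=\varphi_{n}(a)^{-1}$, so $\varphi_{n}$ respects the defining relations of $DC_{4n}$ and is a homomorphism. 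Finally $\psi_{n}\varphi_{n}(a)=a^{n+1}x^{2}=a^{2n+1}=a$, $\psi_{n}\varphi_{n}(x)=x$, $\varphi_{n}\psi_{n}(b)=b^{n+1}y^{4}=b$ and $\varphi_{n}\psi_{n}(y)=y$, so $\psi_{n}\circ\varphi_{n}=id_{DC_{4n}}$ and $\varphi_{n}\circ\psi_{n}=id_{\Pi_{4n,n}}$.

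For Statement~3 the crucial observation is that $\mathbb{A}_{4n}=\langle a\rangle$ is cyclic of order $2n$, hence has a unique subgroup of order $n$, namely $\langle a^{2}\rangle$; since $Ord(b)=n$ this forces $\langle b\rangle=\langle a^{2}\rangle$, so $b=a^{2c}$ with $GCD(c,n)=1$, while $y=a^{m}x$ for some $m$. I would then construct a group automorphism of $DC_{4n}$ sending the sequence $(a^{2},x)$ to $(b,y)$ and invoke the fact recalled in the review that a group automorphism $\varphi$ induces a Cayley isomorphism $\Gamma(DC_{4n},(a^{2},x))\to\Gamma(DC_{4n},\varphi(a^{2},x))$. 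Concretely, for any $t$ with $GCD(t,2n)=1$ the assignment $a\mapsto a^{t}$, $x\mapsto a^{m}x$ preserves $a^{2n}=e$, $x^{2}=a^{n}$ (this relation forces $t$ odd, via $tn\equiv n\ (mod\ 2n)$) and $x^{-1}ax=a^{-1}$, and is onto $DC_{4n}$ by the generation criterion, hence is an automorphism $\phi_{t}$. Since $n$ is odd, the Chinese remainder theorem provides an odd $t$ with $t\equiv c\ (mod\ n)$; as $GCD(c,n)=1$ this $t$ satisfies $GCD(t,2n)=1$. Then $\phi_{t}(a^{2})=a^{2t}=a^{2c}=b$ and $\phi_{t}(x)=a^{m}x=y$, so $\phi_{t}$ carries $(a^{2},x)$ onto $\mathcal{S}=(b,y)$ and yields $\Gamma(DC_{4n},\mathcal{S})\simeq\Gamma(DC_{4n},(a^{2},x))$.

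The main obstacle is Statement~2: guessing the correct preimages under $\varphi_{n}$ — in particular realizing that the generator $a$ of order $2n$ must correspond to $b^{(n+1)/2}y^{2}$ and not to a power of $b$ alone — and then verifying that all three defining relations of $DC_{4n}$ survive. Statements~1 and~3 are essentially bookkeeping once the generation criterion of the previous subsection and the explicit description of the automorphisms of $DC_{4n}$ are in hand.
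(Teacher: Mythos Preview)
Your proof is correct. For Statements~1 and~2 you do essentially what the paper does: the paper proves Statement~1 by the one-line observation that $a^{n+1}\in\langle a^{2}\rangle$ and $a^{n}=x^{2}$, while you invoke the generation criterion $\langle a^{k},a^{m}x\rangle=DC_{4n}\Leftrightarrow GCD(n,k)=1$ from the preceding proposition, which amounts to the same thing; for Statement~2 the paper gives exactly your pair $\varphi(a)=b^{(n+1)/2}y^{2}$, $\varphi(x)=y$, $\psi(b)=a^{2}$, $\psi(y)=x$ and leaves the verification to the reader, which you carry out in full.

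For Statement~3 you take a genuinely different route. The paper observes that any such pair $(b,y)$ satisfies $b^{n}=e$, $y^{4}=e$, $y^{-1}by=b^{-1}$ and then appeals to the ``uniqueness of Cayley graph construction'' argument developed in the proof of Proposition~\ref{5}: one builds $\Gamma(DC_{4n},(b,y))$ step by step and sees that the result is forced by these relations alone, hence independent of the particular choice of $(b,y)$. You instead produce an explicit group automorphism $\phi_{t}\colon a\mapsto a^{t}$, $x\mapsto a^{m}x$ carrying $(a^{2},x)$ to $(b,y)$ and invoke the general fact that group automorphisms induce Cayley isomorphisms. Your argument is cleaner and self-contained---it avoids rerunning the graph-building procedure---while the paper's approach has the virtue of reusing machinery already set up and of making transparent \emph{why} only the relations $b^{n}=y^{4}=e$, $y^{-1}by=b^{-1}$ matter for the graph.
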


\begin{proof}
1. $2\not|n$ implies $a^{n+1}\in \langle a^2\rangle$. $a^n=x^2$
implies $a\in \langle a^2,x \rangle$ and $\langle
a^2,x\rangle=DC_{4n}$.

2. We exhibit group morphisms $\varphi:DC_{4n}\rightarrow
\Pi_{4n,n}$, $\psi:\Pi_{4n,n}\rightarrow DC_{4n}$, satisfying
identities similar to \ref{3}.

Define $\varphi$ and $\psi$ on the generators:
$$\left\{%
\begin{array}{ll}
    \varphi(a)=b^{q}y^2, where\ q=\frac{n+1}{2} \\
    \varphi(x)=y \\
\end{array}%
\right.  and\
\left\{%
\begin{array}{ll}
    \psi(b)=a^{2} \\
    \psi(y)=x \\
\end{array}%
\right. .$$

It is directly checked that $\varphi$ and $\psi$ can be extended
to group morphisms from generators and identities
$\psi\circ\varphi=id_{DC_{4n}}$,
$\varphi\circ\psi=id_{\Pi_{4n,n}}$ are satisfied.

3. It follows that $y^{-1}by=b^{-1}$. Existence of a graph
isomorphism follows from the uniqueness of Cayley graph
construction as in the proof of Proposition \ref{5}.
\end{proof}

%
%

%
%

\begin{example} In Fig.2. continuous arrows mean left multiplication by $a^2$ and dotted arrows mean
left multiplication by $x$.
\begin{center}
$ \xymatrix@R=0.04pc@C=0.04pc{
e\ar[drrrrr]\ar@{.>}[ddd]&&&&&&&&&&a^4\ar[llllllllll]\ar@{.>}[ddd]\\
&&&&&a^2\ar[urrrrr]\ar@{.>}[ddd]&&&&&\\
&&&&&&&&&&\\
x\ar[rrrrrrrrrr]\ar@{.>}[ddd]&&&&&&&&&&a^2x\ar[dlllll]\ar@{.>}[ddd]\\
&&&&&a^4x\ar[ulllll]\ar@{.>}[ddd]&&&&&\\
&&&&&&&&&&\\
a^3\ar[drrrrr]\ar@{.>}[ddd]&&&&&&&&&&a\ar[llllllllll]\ar@{.>}[ddd]\\
&&&&&a^5\ar[urrrrr]\ar@{.>}[ddd]&&&&&\\
&&&&&&&&&&\\
a^3x\ar[rrrrrrrrrr]\ar@/^6ex/@{.>}[uuuuuuuuu]&&&&&&&&&&a^5x\ar[dlllll]\ar@/^6ex/@{.>}[uuuuuuuuu]\\
&&&&&ax\ar[ulllll]\ar@/^6ex/@{.>}[uuuuuuuuu]&&&&&\\
 } $

Fig.2. - the graph $\Gamma(DC_{12},(a^2,x))$.

\end{center}
\end{example}

\begin{remark} By the Burnside Basis Theorem all minimal generating sequences of $DC_{2^l}$ must have length $2$.
If $n$ is not a prime power, then $DC_{4n}$ has minimal generating
sequences containing more than two elements. For $DC_{24}$ there
are at least $6$ non-equivalent minimal generating sets containg
three elements. For example, $DC_{24}$ has the following minimal
generating sets: $\{a^{2},a^{3},x\}$ with orders $6,4,4$,
$\{a^{3},a^{4},x\}$ with orders $4,3,4$, $\{a^{3},x,a^{2}x\}$ with
orders $4,4,4$.
\end{remark}

\section*{Acknowledgement} Computations were performed using the
computational algebra system MAGMA, see Bosma et al. \cite{B3}.


\end{document}